\newtheorem{Theorem}{Theorem}[section]
\newtheorem{Lemma}{Lemma}[section]
\newtheorem{Definition}{Definition}[section]
\newtheorem{Conjecture}{Conjecture}[section]
\begin{document}
\title{Paths With Three Blocks In Digraphs }
\author{Maidoun Mortada $^1$, Amine El Sahili $^2$, 
		\\Zahraa Mohsen $^{1,3}$}
	
	\footnotetext[1]{Lebanese University, KALMA Laboratory, Baalbeck.}
	\footnotetext[2]{Lebanese University, KALMA Laboratory, Beirut.}
	\footnotetext[3]{Paris Diderot University, IMJ Laboratory, Paris.}
\date{}
\maketitle

\begin{center}
\begin{abstract}A path with three blocks $P(k,l,r)$ is an oriented path formed by $k$-forward arcs followed by $l$-backward arcs then $r$-forward arcs. We prove that any $(2k+1)$-chromatic digraph contains a path $P(1,k,1)$. However the existence of $P(1,l,1)$ with $l \geq k$ is established in any $(k+4)$-chromatic digraph. In general, we establish a quadratic bound for paths with three blocks.\\

\end{abstract}

\end{center}

\section{Introduction}

The digraphs considered here have no loops or multiple edges. An oriented graph is a digraph in which, for every two vertices $x$ and $y$, at most one of $(x,y)$ or $(y,x)$ is an edge. The chromatic number of a digraph is the chromatic number of its underlying graph. A graph $G$ is said to be $k$-critical if $\chi(G)=k$  and $\chi(G-v)=k-1$ for any vertex $v$ in $V(G)$ .

A block of a path in a digraph is a maximal directed subpath. We recall that the length of a path is the number of its edges.

Given an $n$-chromatic digraph $D$, which oriented paths of length $n-1$ can be found in $D$?

 Havet and Thomassé proved that any $n$-tournament contains any oriented path of length $n-1$ except in three cases: the directed 3-cycle, the regular tournament on five vertices, and the Paley tournament on seven vertices; in these cases $D$ contains no antidirected path of length $n-1$ \cite{Havet}.

In general, when $D$ is an $n$-chromatic digraph, the situation is quite different. Addario.et.al. proved that any  $n$-chromatic digraph $D$, contains any path with two blocks of length $(n-1)$\cite{Havetthomasse}. Moreover, Mortada.et.al proved that any $(n+1)$-chromatic digraph contains a three blocks path of length $n-1$, in which two consecutive blocks are of length 1 each\cite{Mortada}. Regardless Burr's result\cite{Burr}, no more results have been established in this direction. Burr's result states that any $(n-1)^2$-chromatic digraph contains any oriented tree of order $n$. Consequently, we easily deduce that any $(n-1)^2$-chromatic digraph contains any oriented path of order $n$.\\

In this paper we are interested in studying three blocks paths. In the second section, we improve the bound to $\dfrac{3}{4}n^{2}$ for paths with three blocks. In the third section, we prove that any $2k+1$-chromatic digraph contains a path $P(1,k,1)$, for $k$ positive integer. Moreover, we show that this becomes $k+4$ if we searched for the existence of $P(1,l,1)$, with $l\geq k$.\\

\section{Paths with Three Blocks in Digraphs with Large Chromatic Number}

We follow in this section the same reasoning as El Sahili's result.\cite{sahili}\\

Let $f(n)$ be the smallest positive integer such that every $f(n)$- chromatic digraph $D$ contains a $P(k,l,r)$ where $k,l,r$ are positive integers such that $k+l+r=n-1$. It follows from the result of Burr \cite{Burr} that $f(n)\leq (n-1)^{2}$.\\
  Given a path $P$, we denote by $\bar{P}$ the path obtained from $P$ by reversing the directions of all arcs of $P$.

\begin{Lemma}
 Let $G$ be a graph containing no $K_{2n+1}$, $n \geq 2$. Suppose that we can orient $G$ in such a way that each vertex has in-degree at most $n$, then $\chi(G) \leq 2n$.\\
\end{Lemma}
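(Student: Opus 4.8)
The plan is to proceed by contradiction, assuming $G$ is a vertex‑critical graph with $\chi(G)\geq 2n+1$ that admits an orientation in which every vertex has in‑degree at most $n$, and then to locate a $K_{2n+1}$ inside $G$. By criticality every vertex of $G$ has degree at least $2n$, and since the total in‑degree equals the number of edges, which is at most $n|V(G)|$, a counting argument forces the average degree to be close to $2n$; in fact I would first argue that $G$ must be exactly $2n$‑regular, or at least that the set of vertices of degree $2n$ is large and well‑structured. The key local observation is that if a vertex $v$ has in‑degree exactly $n$ and out‑degree exactly $n$ (degree $2n$), then deleting $v$ from the critical graph leaves something $2n$‑colourable, and the colouring constraints around $v$ are extremely rigid.

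The heart of the argument should be a Brooks‑type / Gallai‑type analysis of the low‑degree structure. I would invoke the classical fact about the subgraph induced by vertices of degree exactly $\delta(G)=2n$ in a $(2n+1)$‑critical graph — by Gallai's theorem this "low‑vertex subgraph" has a very restricted block structure (each block is a complete graph or an odd cycle). Combined with the in‑degree bound, I would rule out the odd‑cycle and small‑complete‑graph blocks: an odd cycle, suitably oriented, always has a vertex of in‑degree at most $1<n$ unless $n$ is tiny, and more importantly the orientation hypothesis interacts with the colouring to push us toward a $K_{2n+1}$. The cleanest route may be: take a Gallai block $B$ that is a complete graph $K_m$; each vertex of $B$ has degree $2n$ in $G$, in‑degree $\leq n$ in the orientation, hence in‑degree $\leq n$ within $B$, so $m-1\leq 2n$; then push on the remaining $2n-(m-1)$ edges leaving $B$ and a colouring of $G-B$ to show $m$ cannot be less than $2n+1$, i.e. $B=K_{2n+1}$, contradicting the hypothesis that $G$ has no $K_{2n+1}$.

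Concretely, the steps in order would be: (1) reduce to $G$ being $(2n+1)$‑critical; (2) show $\delta(G)=2n$ and record that every vertex $v$ with $d(v)=2n$ has in‑degree exactly $n$ and out‑degree exactly $n$ in the orientation; (3) apply Gallai's description of the subgraph $H$ induced by the degree‑$2n$ vertices — every block of $H$ is a clique or an odd cycle; (4) use the orientation's in‑degree bound to eliminate odd cycles as blocks (an odd cycle has a vertex receiving at least two of its cycle‑edges... wait — rather, use that an odd cycle as a block would let us extend a near‑colouring, contradicting criticality in the classical way, now compatible with the degree bound); (5) for a clique block $K_m\subseteq H$, combine the in‑degree count inside $K_m$ with a colouring of $G$ minus this block to force $m=2n+1$; (6) conclude $K_{2n+1}\subseteq G$, a contradiction, so $\chi(G)\leq 2n$.

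The main obstacle I anticipate is step (5): turning the purely local in‑degree bound into the global statement that the clique has size exactly $2n+1$. Having $m-1\leq 2n$ from in‑degrees is easy, but the reverse inequality $m\geq 2n+1$ requires exploiting criticality — one must colour $G-V(K_m)$ with $2n$ colours (possible since $\chi(G-v)\leq 2n$ for each such $v$, but one needs this for a whole clique, which is where the Gallai block structure and the fact that each vertex of the clique sends only $2n-(m-1)$ edges outside must be used to extend a partial colouring unless $m$ is as large as $2n+1$). Getting the bookkeeping exactly right — especially the case $n=2$ where $2n+1=5$ and small sporadic configurations could interfere, which is presumably why the hypothesis is $n\geq 2$ rather than $n\geq 1$ — is the delicate part; everything else is standard critical‑graph and orientation counting.
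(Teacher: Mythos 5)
The paper states this lemma without proof (it is quoted as a known tool, essentially El Sahili's orientation lemma), so there is no in-paper argument to compare against; judged on its own, your plan has the right skeleton but, as written, it is not yet a proof: you explicitly leave step (5) open, and step (4) (eliminating odd-cycle blocks) is only gestured at. The difficulty you anticipate is, however, illusory. After your step (2) you already know that the $(2n+1)$-critical subgraph $G'$ (to which the orientation restricts, still with in-degrees at most $n$) is exactly $2n$-regular: criticality gives $d(v)\ge 2n$ for every $v$, while $2|E(G')|=\sum_v d(v)$ and $|E(G')|=\sum_v d^{-}(v)\le n|V(G')|$ force equality throughout. Hence $\Delta(G')=2n$, and Brooks' theorem finishes immediately: $G'$ is connected, it is not an odd cycle because $2n\ge 4$ (this is where $n\ge 2$ enters, not sporadic small configurations), and it is not $K_{2n+1}$ by hypothesis, so $\chi(G')\le 2n$, a contradiction. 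No Gallai-type analysis, no partial-colouring extension, and no bookkeeping about clique blocks $K_m$ with $m<2n+1$ is needed.

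If you insist on routing the argument through Gallai's theorem as in your steps (3)--(5), the observation you are missing is that, by the regularity just established, the low-vertex subgraph $H$ is all of $G'$, and a $k$-critical graph with $k\ge 4$ is $2$-connected, so $H=G'$ is a single block; that block must therefore be a spanning complete graph (forcing $G'=K_{2n+1}$, excluded by hypothesis) or a spanning odd cycle (forcing $n=1$, excluded). In particular the scenario you worry about in step (5) --- a clique block of size $m<2n+1$ with edges leaving it that must be handled by extending a colouring of $G-B$ --- never arises, and the step (4) concern disappears for the same reason. So your route is salvageable and close to the standard argument, but as submitted the proof is incomplete at exactly the two steps you flagged, and the clean way to close it is the regularity-plus-Brooks finish above.
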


\begin{Definition}
Define the sequence $g(m, i)$ for $m \geq 4$ and $0 \leq i \leq \frac{m}{2} -1$ by:
\begin{align*} &g(m,0)=m \text{ and } g(4,1)=4\\
&g(m,i)=g(m-1,i-1)+2(m-3) \text{ for } m\geq 5 \text{ and } i\geq 1.
\end{align*}

\end{Definition}

\begin{Lemma}

Any $g(m,i)$-chromatic digraph $D$ contains any path $P(k,m-1-k-i,i)$ for $k$ positive integer less than $m-1$.

\end{Lemma}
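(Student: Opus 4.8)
The plan is to prove the lemma by induction on $i$. For $i=0$ we have $g(m,0)=m$ and the assertion reduces to: every $m$-chromatic digraph contains $P(k,m-1-k,0)$ for $1\le k\le m-2$; since $P(k,m-1-k,0)$ is a path with two blocks of length $m-1$, this is exactly the theorem of Addario-Berry, Havet and Thomass\'{e} \cite{Havetthomasse}. For the sporadic value $g(4,1)=4$ one needs only that every $4$-chromatic digraph contains $P(2,0,1)$ (a directed path of length $3$, by Gallai--Roy) and $P(1,1,1)$ (a small case, also a consequence of Section~3). So I would assume $m\ge 5$, $1\le i\le m/2-1$, and that the lemma holds at $(m-1,i-1)$; fix $k$ with $1\le k<m-1$ and put $l=m-1-k-i$.

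If $l<0$ there is nothing to prove, and if $l=0$ then $P(k,l,i)$ is a directed path of length $m-1$, present since $g(m,i)\ge m$. So assume $l\ge 1$ and, for a contradiction, that some digraph $D$ with $\chi(D)=g(m,i)$ contains no $P(k,l,i)$. I would first dispose of the case that $D$ contains a tournament $T$ on $2m-5$ vertices: since $m\ge 5$, $2m-5\ge m$, so $P(k,l,i)$ extends to an oriented path $Q$ on $2m-5$ vertices; every tournament on at least $8$ vertices contains every such $Q$ \cite{Havet}, and the cases $m\le 6$ are checked by hand, so $T\supseteq Q\supseteq P(k,l,i)$, a contradiction. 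Hence $D$ has no $K_{2m-5}$, and therefore neither does $\overline{D[X]}$ for any $X\subseteq V(D)$ (reversing arcs does not change the underlying graph).

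The heart of the step is a partition $V(D)=X\sqcup Z$ with $Z=\{v:d^+_D(v)\ge m-2\}$ and $X=V(D)\setminus Z$. In $\overline{D[X]}$ every vertex has in-degree at most $m-3$ and there is no $K_{2m-5}$, so Lemma~2.1 (with $n=m-3\ge 2$) yields $\chi(D[X])=\chi(\overline{D[X]})\le 2(m-3)$. I claim $\chi(D[Z])\le g(m-1,i-1)-1$: otherwise the induction hypothesis gives a copy $\mathcal P$ of $P\big(k,(m-1)-1-k-(i-1),i-1\big)=P(k,l,i-1)$ in $D[Z]$ (the index $k$ is admissible because $l\ge 1$ forces $k\le m-2-i<m-2$), and $\mathcal P$ has $m-1$ vertices; its last vertex $e$ lies in $Z$, so $d^+_D(e)\ge m-2$, while the last block of $\mathcal P$ having length $i-1\ge 1$ means the predecessor of $e$ on $\mathcal P$ is an in-neighbour of $e$, so at most $m-3$ of the $m-2$ other vertices of $\mathcal P$ are out-neighbours of $e$; hence $e$ has an out-neighbour $z\notin V(\mathcal P)$, and $\mathcal P$ followed by $e\to z$ is a $P(k,l,i)$ in $D$ — contradiction. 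Consequently
$$\chi(D)\le\chi(D[X])+\chi(D[Z])\le 2(m-3)+g(m-1,i-1)-1=g(m,i)-1,$$
contradicting $\chi(D)=g(m,i)$. This completes the step for $i\ge 2$.

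The hard part will be the case $i=1$. There the shorter path $P(k,l,0)$ is a two-block path and the vertex to which one must append the new forward arc is its source-bend, whose only neighbour on the path is an out-neighbour; the above count then forces the threshold defining $Z$ up to $m-1$, so the Lemma~2.1 estimate costs $2(m-2)$ and overshoots the available budget $2(m-3)=g(m,1)-g(m-1,0)$ by $2$. To recover the exact bound I would have to argue more carefully at this one step — for instance, by analysing a source-bend that dominates by out-arcs every other vertex of its $P(k,l)$ and re-routing the two directed subpaths that make up $P(k,l)$, or by choosing the copy of $P(k,l)$ in $D[Z]$ extremally so that a free out-neighbour of its source-bend is guaranteed. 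That refinement, not the clean argument above, is where the real difficulty lies.
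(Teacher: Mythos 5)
Your induction step for $i\ge 2$ is essentially the paper's own argument: the same partition of $V(D)$ by the out-degree threshold $m-2$, the same application of Lemma 2.1 to the reversed low-out-degree part, the same exclusion of a $K_{2m-5}$ via paths in tournaments (the paper cites Thomason where you cite Havet--Thomass\'e plus small cases), and the same extension of a $P(k,m-1-k-i,i-1)$ ending in the high-out-degree part by one forward arc. That portion of your write-up is correct. The genuine gap is the case you explicitly leave open, $i=1$ with $m\ge 5$: it cannot be skipped. The statement of the lemma includes $i=1$, and the recursion $g(m,i)=g(m-1,i-1)+2(m-3)$ is unrolled in Theorem 2.5 all the way down to $g(\cdot,0)$, so the step from $(m-1,0)$ to $(m,1)$ is used for every $m$; with only the anchor $g(4,1)=4$ and the $i\ge2$ step, you do not reach the bound $f(m)\le \frac{3}{4}m^{2}$. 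As it stands, your proposal proves strictly less than the lemma.

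That said, your diagnosis of where the difficulty sits is accurate rather than a misunderstanding. The paper runs the identical counting for all $i\ge 1$: it takes $Q=P(k,m-1-k-i,i-1)$, which has $m-1$ vertices, notes that its end vertex $v$ satisfies $d^{+}(v)\ge m-2$, and concludes that $v$ has an out-neighbour off $Q$. For $i\ge 2$ this works because the neighbour of $v$ on $Q$ is an in-neighbour, but for $i=1$ the vertex $v$ is the tail of the final backward arc, so its neighbour on $Q$ is an out-neighbour and all $m-2$ out-neighbours of $v$ may lie on $Q$; raising the threshold to $m-1$ restores the extension but, as you compute, turns the Lemma 2.1 estimate into $2(m-2)$ and overshoots the budget $g(m,1)-g(m-1,0)=2(m-3)$ by $2$. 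So to complete the proof you must either supply the finer argument you only gesture at for the $(m,1)$ step (e.g., a structural analysis of a $P(k,m-2-k)$ whose end vertex dominates the whole path), or anchor the induction at $i=1$ for every $m$ by proving or importing the corresponding statement about $P(k,m-2-k,1)$ in $g(m,1)$-chromatic digraphs, in the spirit of the El Sahili-type result the paper invokes for $g(4,1)=4$, rather than only at $m=4$.
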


\begin{proof}

We will proceed by induction on $i$.\\
The case is solved by Addario.et.al \cite{Havet thomasse} for $i=0$. The case is deduced from El Sahili \cite{Sahi}, and see also \cite{Mortada} for $i=1$, $m=4$. For $i \geq 1$, $m \geq 5$, suppose to the contrary that $D$ is a $g(m,i)$-chromatic with no $P(k,m-1-k-i,i)$. Let $H$ be the subdigraph of $D$ induced by the vertices of out-degree in $D$ at least $m-2$. Set $H'$
the subdigraph induced by $V(D-H)$. We claim that $H$ contains no path of the form $P(k,m-1-k-i,i-1)$, since else let $Q=$ $P(k,m-1-k-i,i-1)$ in $H$, and let $v$  be the end vertex of $Q$ which is also the end vertex of the block of length $i-1$. Since $d^{+}(v) \geq m-2$, there exists a vertex $u \in N^{+}(v)-Q$, hence $Q \cup (v,u)$ is a path $P(k,m-1-k-i,i)$ in $H$ and so in $D$, a contradiction. Hence by the induction hypothesis $\chi(H)< g(m-1, i-1)$. Now using Lemma 2.1, we can clearly observe that $\chi(H')\leq 2(m-3)$, since else $H'$ contains a tournament of order $2m-5$ and so contains  $P(k,m-1-k-i,i)$ by \cite{Thomason}, contradiction. Therefore $ \chi(D) \leq \chi(H)+\chi(H')< g(m-1,i-1)+2(m-3)=g(m,i)$, a contradiction.
\end{proof}

In the same way we can prove the following lemma:
\begin{Lemma}

Any $g(m,i)$-chromatic digraph $D$ contains any path $\bar{P} (k,m-1-k-i,i)$ for $k$ positive integer less that $m-1$.

\end{Lemma}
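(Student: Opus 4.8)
The plan is to mirror the proof of Lemma 2.3 almost verbatim, exploiting the symmetry between a path and its reverse. Suppose for contradiction that $D$ is a $g(m,i)$-chromatic digraph containing no $\bar{P}(k,m-1-k-i,i)$. Again I induct on $i$, with the base cases $i=0$ (Addario et al.) and $i=1$, $m=4$ (El Sahili / Mortada) handled as before; note that reversing all arcs of a digraph does not change its chromatic number, so the base-case results apply equally to $\bar P$. For the inductive step with $i\ge 1$, $m\ge 5$, the one change is that the block we want to \emph{extend} is now a backward block, so instead of looking at large out-degree we look at large \emph{in}-degree: let $H$ be the subdigraph of $D$ induced by the vertices $v$ with $d^-_D(v)\ge m-2$, and let $H'$ be induced by $V(D)\setminus V(H)$.

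The key step is the claim that $H$ contains no $\bar P(k,m-1-k-i,i-1)$. Indeed, if $Q$ were such a path in $H$, let $v$ be the endvertex of $Q$ lying at the end of the terminal (backward, of length $i-1$) block. Since $d^-_D(v)\ge m-2 > |V(Q)|$, there is a vertex $u\in N^-_D(v)\setminus V(Q)$; appending the arc $(u,v)$ lengthens the terminal backward block by one, producing a $\bar P(k,m-1-k-i,i)$ in $D$, a contradiction. By the induction hypothesis $\chi(H) < g(m-1,i-1)$. For $H'$: every vertex of $H'$ has in-degree in $D$, hence in $H'$, at most $m-3$, so by Lemma 2.1 (with $2n = 2(m-3)$, i.e.\ $n=m-3\ge 2$) either $\chi(H')\le 2(m-3)$ or $H'$ contains $K_{2m-5}$; in the latter case an orientation of $K_{2m-5}$ is a tournament on $2m-5$ vertices, which by \cite{Thomason} contains $\bar P(k,m-1-k-i,i)$ (a path on $m-1<2m-5$ vertices), again a contradiction. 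Hence $\chi(H')\le 2(m-3)$, and $\chi(D)\le \chi(H)+\chi(H') < g(m-1,i-1)+2(m-3) = g(m,i)$, the desired contradiction.

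I do not anticipate a genuine obstacle here: the whole point is that the argument of Lemma 2.3 is self-dual under arc-reversal, and every ingredient used (Lemma 2.1, the base cases, the tournament result of \cite{Thomason}) is insensitive to globally reversing orientations. The only things to be careful about are bookkeeping details — that $m-2$ strictly exceeds the order $m-1$ of the sought path minus one so that a fresh neighbour $u$ really exists outside $Q$, and that the in-degree bound $m-3$ on $H'$ matches the hypothesis $n\ge 2$ of Lemma 2.1 when $m\ge 5$. Since these are exactly the inequalities already verified in Lemma 2.3, the proof goes through unchanged, which is why it is reasonable to write simply ``in the same way.''
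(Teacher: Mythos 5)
Your argument is exactly what the paper means by ``in the same way'': you dualize the proof of the preceding lemma by replacing out-degree with in-degree, extending the terminal backward block through an in-neighbour of its endvertex, and handling $H'$ via Lemma 2.1 together with the tournament result, so it is the same approach and stands or falls with the paper's own argument. Two small miscounts in your parentheticals should be fixed (the target path has $m$ vertices, not $m-1$, and $Q$ has $m-1$ vertices, so $d^-(v)\ge m-2$ equals, rather than exceeds, $|V(Q)\setminus\{v\}|$); this counting subtlety is inherited verbatim from the paper's proof of the unreversed lemma and does not reflect any divergence from it.
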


\begin{Lemma}

 For every $m \geq 4$, the sequence $g(m,i)$ is increasing with respect to $i$.
\end{Lemma}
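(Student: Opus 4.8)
The plan is to reduce the claim to an explicit formula for $g$ and read off monotonicity from it; the induction is on $m$.

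\medskip

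\noindent\textbf{Base value $m=4$.} Here the admissible indices are $i\in\{0,1\}$ and $g(4,0)=g(4,1)=4$ straight from the definition, so $g(4,\cdot)$ is (weakly) increasing. This is the one value of $m$ for which the recursive clause of the definition never fires; it must be treated by hand, and one should note that $g(4,1)=g(4,0)$, so for $m=4$ the statement can only be read as ``non-decreasing''.

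\medskip

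\noindent\textbf{A closed form for $m\ge 5$.} I would prove, by induction on $m$, that
\begin{equation*}
g(m,i)=m+i\,(2m-6-i)=-\bigl(i-(m-3)\bigr)^{2}+(m-3)^{2}+m,\qquad 0\le i\le \tfrac{m}{2}-1 .
\end{equation*}
The base case $m=5$ is checked by hand: $g(5,0)=5$ and $g(5,1)=g(4,0)+2(5-3)=8=5+1\cdot 3$ (note $g(4,1)$ is never invoked by the recursion for an admissible $g(m,i)$, so the isolated $m=4$ case does not interfere). For the inductive step, $i=0$ is just the defining value $g(m,0)=m$, and for $1\le i\le\tfrac m2-1$ the recursion together with the induction hypothesis applied to $m-1$ at the index $i-1$ (admissible, since $0\le i-1\le\tfrac{m-1}{2}-1$ whenever $1\le i\le \tfrac m2-1$) yields
\begin{equation*}
g(m,i)=g(m-1,i-1)+2(m-3)=\bigl[(m-1)+(i-1)(2m-7-i)\bigr]+2(m-3)=m+i(2m-6-i),
\end{equation*}
after a routine expansion.

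\medskip

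\noindent\textbf{Conclusion.} The formula makes the result transparent: for fixed $m\ge 5$, $g(m,\cdot)$ is the downward parabola $-(i-(m-3))^{2}+(m-3)^{2}+m$, which is increasing for $i\le m-3$, and every admissible index satisfies $i\le\tfrac m2-1\le m-3$, the last inequality being equivalent to $m\ge 4$. Hence $g(m,\cdot)$ is increasing on its whole domain for every $m\ge 5$; combined with the $m=4$ case, the lemma follows. (Concretely, $g(m,i+1)-g(m,i)=2m-7-2i\ge 2m-7-2(\tfrac m2-2)=m-3>0$ for $m\ge 5$, so there the monotonicity is even strict.)

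\medskip

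There is no genuine obstacle here: the only thing needing care is bookkeeping with the index ranges — in particular observing that the closed form, and strict monotonicity, really do fail at $m=4$, so that value has to be dealt with separately, and that each index fed to the induction hypothesis stays inside the domain $0\le i\le\tfrac m2-1$ of $g$.
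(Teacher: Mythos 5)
Your proof is correct, but it follows a genuinely different route from the paper's. The paper argues directly by induction on $i$ (uniformly in $m$): after the base checks $g(4,1)\ge g(4,0)$ and $g(m,1)=m-1+2(m-3)\ge m=g(m,0)$ for $m\ge 5$, the inductive step is simply that the recursion adds the same constant to both sides, $g(m,i+1)=g(m-1,i)+2(m-3)\ge g(m-1,i-1)+2(m-3)=g(m,i)$, so no closed form is ever needed. You instead establish the explicit formula $g(m,i)=m+i(2m-6-i)$ by induction on $m$ and read off monotonicity from the parabola; your verification of the recursion, the index bookkeeping (that $i-1$ lies in the domain of $g(m-1,\cdot)$, and that $g(4,1)$ is an isolated value never invoked by the recursion), and the difference computation $g(m,i+1)-g(m,i)=2m-7-2i\ge m-3$ are all sound, and your reading of ``increasing'' as non-decreasing at $m=4$ matches how the paper uses the lemma. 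The paper's argument is shorter and computation-free; yours costs a routine expansion but buys more: strict monotonicity for $m\ge 5$ and a closed form that would also subsume the telescoping computation in the proof that $f(m)\le\tfrac34 m^2$, since substituting $i=\tfrac{m}{2}-1$ and $i=\tfrac{m-3}{2}$ into your formula immediately reproduces the paper's values $\tfrac34 m^2-3m+5$ and $\tfrac34 m^2-\tfrac{7m}{2}+\tfrac{27}{4}$.
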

\begin{proof}

 We argue by induction on $i$.\\
 For $m=4$, $g(4,1)=4 \geq g(4,0)$.\\
 For $m\geq 5$, $g(m,1)= g(m-1,0)+2(m-3)= m-1+2(m-3) \geq m = g(m,0)$.\\
  Suppose that $g(m,i) \geq g(m,i-1)$.\\
   Then $g(m,i+1) = g(m-1,i) + 2(m-3) \geq g(m-1,i-1)+2(m-3)=g(m,i)$.\\

\end{proof}

\begin{Theorem}

$f(m)\leq \dfrac{3}{4}m^{2}$, $m\geq4$.
\end{Theorem}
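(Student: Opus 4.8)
The idea is to use Lemma 2.2, which says that every $g(m,i)$-chromatic digraph contains every path $P(k, m-1-k-i, i)$. Since $f(m)$ is the smallest integer forcing *some* path with three blocks of total length $m-1$, it suffices to exhibit one admissible choice of the parameter $i$ and bound $g(m,i)$ from above by $\tfrac{3}{4}m^2$. First I would solve the recurrence in Definition 2.1 explicitly: unfolding $g(m,i) = g(m-1,i-1) + 2(m-3)$ exactly $i$ times (this is legitimate as long as we stay in the range $m \geq 5$, $i \geq 1$) gives
\[
g(m,i) \;=\; g(m-i,\,0) \;+\; 2\sum_{j=0}^{i-1}(m-3-j) \;=\; (m-i) \;+\; 2\Bigl(i(m-3) - \tfrac{i(i-1)}{2}\Bigr),
\]
which simplifies to a closed form of the shape $g(m,i) = -i^2 + (2m-6)i + (m-i) + \text{const}$, i.e. roughly $2mi - i^2$ plus lower-order terms. (I would double-check the boundary case $m-i = 4$, $i \geq 1$, where the base value is $g(4,1)=4$ rather than $g(4,0)=4$ — but these agree, so the formula is uniform.)

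Next I would optimize over $i$. Treating $g(m,i) \approx 2mi - i^2$ as a function of $i$, the maximum-allowed value $i \leq \tfrac{m}{2}-1$ is well below the unconstrained vertex $i = m$, so $g(m,i)$ is increasing throughout the legal range (consistent with Lemma 2.4); hence the *largest* value is at $i = \tfrac{m}{2}-1$ and the relevant question is really whether even that largest value stays under $\tfrac34 m^2$. Plugging $i = \tfrac{m}{2}-1$ into the closed form gives $g\bigl(m, \tfrac{m}{2}-1\bigr) = 2m\cdot\tfrac{m}{2} - \tfrac{m^2}{4} + O(m) = m^2 - \tfrac{m^2}{4} + O(m) = \tfrac34 m^2 + O(m)$. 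So the leading term is exactly $\tfrac34 m^2$, and one just needs to check that the $O(m)$ correction is non-positive (or absorb it), which is a routine finite computation; the choice $i = \lfloor m/2\rfloor - 1$, $k=1$ (say), giving the path $P(1,\, m-2-i,\, i)$, then certifies $f(m) \leq g(m,i) \leq \tfrac34 m^2$.

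The only genuinely delicate point is bookkeeping at the boundary of the recurrence's domain and parity: the definition requires $0 \leq i \leq \tfrac{m}{2}-1$, so for odd $m$ one uses $i = \tfrac{m-3}{2}$ or $\tfrac{m-1}{2}$ depending on how $\tfrac{m}{2}-1$ rounds, and one must make sure $k = m-1-k-i$-type constraints ($k \geq 1$, middle block $\geq 1$, and $k$ a positive integer less than $m-1$) are all met — with $i$ near $m/2$ and $k$ small this is fine, but it should be stated. I expect this indexing/parity check to be the main (and only real) obstacle; the analytic content is entirely contained in Lemma 2.2 and the explicit solution of the linear recurrence. Finally I would note that for the small cases $m = 4$ (and perhaps $m=5,6$) one verifies $f(m) \leq \tfrac34 m^2$ directly from $g(4,1)=4 \leq 12$, etc., so the bound holds for all $m \geq 4$.
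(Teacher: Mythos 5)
Your computation of $g$ is fine and in fact reproduces the paper's: unfolding the recurrence gives $g(m,i)=m+2im-6i-i^{2}$, so $g\left(m,\frac{m}{2}-1\right)=\frac{3}{4}m^{2}-3m+5$ (and $\frac{3}{4}m^{2}-\frac{7}{2}m+\frac{27}{4}$ at $i=\frac{m-3}{2}$ for odd $m$), both below $\frac{3}{4}m^{2}$. The genuine gap is in the reduction to that computation: you read $f(m)$ as the chromatic number forcing \emph{some} three-block path of total length $m-1$, and accordingly you ``certify'' the bound by exhibiting the single path $P(1,m-2-i,i)$ for one choice of $i$. But $f(m)$ here (as in Burr's bound $f(n)\leq(n-1)^{2}$ and the two-block result it parallels) is the chromatic number forcing \emph{every} path $P(k,l,r)$ with $k+l+r=m-1$; under your reading the theorem would be almost vacuous, since already $g(m,1)=3m-7$ forces a three-block path. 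So exhibiting one witness path with $k=1$ does not prove the statement, and your remark that ``the relevant question is whether even the largest value stays under $\frac34 m^2$'' is the right instinct but is never turned into an argument.

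What is missing is the covering step that the paper performs (tersely) by invoking Lemmas 2.2, 2.3 and 2.4 together: for an arbitrary $P(k,l,r)$ with $k+l+r=m-1$ one has $\min(k,r)\leq\frac{m}{2}-1$ (resp. $\frac{m-3}{2}$ for odd $m$); if $r$ is the small parameter, Lemma 2.2 with $i=r$ applies, while if only $k$ is small one reads the path from its other end, i.e. views it as $\bar{P}(r,l,k)$ and applies Lemma 2.3 with $i=k$ — this is precisely why the $\bar{P}$ lemma exists, and your proposal never uses it nor allows arbitrary first-block length $k$. Finally, Lemma 2.4 (monotonicity of $g(m,\cdot)$) is not just a consistency check: it is what lets you bound every relevant threshold $g(m,i)$, $0\leq i\leq\frac{m}{2}-1$, by the single value $g\left(m,\frac{m}{2}-1\right)$ (resp. $g\left(m,\frac{m-3}{2}\right)$), so that one chromatic number forces all these paths simultaneously. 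With those three ingredients added, your closed-form evaluation completes the proof exactly as in the paper.
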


\begin{proof}
Since every $g(m,i)$-chromatic digraph contains any $P(k,m-1-k-i,i)$ and any $\bar{P}$ $(k,m-1-k-i,i)$, then $f(m) \leq g\left(m,\dfrac{m}{2}-1\right)$ for  $m$ even and $f(m) \leq g\left(m,\dfrac{m-3}{2}\right)$ for $m$ odd.\\
For $m$ even,
\begin{align*}
g&\left(m,\dfrac{m}{2}-1\right)\\
 &\quad=g\left(m-1,\left(\dfrac{m}{2}-1\right)-1\right)+2(m-3)\\
	&\quad=g\left(m-2,\left(\dfrac{m}{2}-1\right)-2\right)+2((m-1)-3)+2(m-3)\\               
      &\quad=g\left(m-\left(\dfrac{m}{2}-2\right),1\right)+2(m-3)+\cdots+2\left(m-\left(\dfrac{m}{2}-3\right)-3 \right)\\
      &\quad=g\left(m-\left(\dfrac{m}{2}-1\right),0\right)+2(m-3)+\cdots+2\left(m-\left(\dfrac{m}{2}-2\right)-3\right)\\
     &\quad=g\left(\dfrac{m}{2}+1,0\right)+2\sum_{k=0}^{m/2-2}((m-k)-3)\\ 
      &\quad=\dfrac{m}{2}+1+\dfrac{3m^{2}}{4}-\dfrac{7m}{2}+4\\ 
      &\quad=\dfrac{3}{4}m^{2}-3m+5.
\end{align*}               
For $m$ odd,
\begin{align*}
 g&\left(m,\dfrac{m-3}{2}\right)\\
&\quad=g\left(m-1,\left(\dfrac{m-3}{2}\right)-1\right)+2(m-3)\\
&\quad=g\left(m-2,\left(\dfrac{m-3}{2}\right)-2\right)+2((m-1)-3)+2(m-3)\\ 
&\quad=g\left(m-\left(\dfrac{m-3}{2}-1\right),1\right)+2(m-3)+....+2\left(m-\left(\dfrac{m-5}{2}-1\right)-3\right)\\ 
&\quad=g\left(m-\left(\dfrac{m-3}{2}-1\right)-1,0\right)+2(m-3)+....+2\left(m-\left(\dfrac{m-5}{2}\right)-3\right)\\
&\quad=g\left(\dfrac{m+3}{2},0\right)+2\sum^{m/2-5/2}_{k=0}((m-k)-3)\\ 
&\quad=\dfrac{m+3}{2}+\dfrac{3m^{2}}{4}-4m+\dfrac{21}{4}\\ 
&\quad=\dfrac{3}{4}m^{2}-\dfrac{7m}{2}+\dfrac{27}{4}.
\end{align*}           
                  
Hence, $f(m)\leq \dfrac{3}{4}m^{2}$ for all $m$.\\

\end{proof}

\section{ Paths with Three Blocks, P(1,k,1)}

In this section, we are going to improve the bound we found in section 2 for specific forms of $P(k,l,r)$. 

\begin{Theorem}
Any  $(k+4)$-chromatic digraph D contains a $P(1,l,1)$ for some $l \geq k$.
\end{Theorem}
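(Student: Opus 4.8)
\emph{The plan is to argue by contradiction.} Suppose $D$ has $\chi(D)\ge k+4$ but contains no $P(1,l,1)$ with $l\ge k$; replacing $D$ by a $(k+4)$-critical subdigraph (which inherits the absence of such a path, being a subdigraph), I may assume every vertex of $D$ has degree at least $k+3$ in the underlying graph. The skeleton of the argument is a partition $V(D)=Z\cup W$, where $Z$ is the set of vertices lying on no directed path of length $k$. Then $D[Z]$ contains no directed path of length $k$ at all, since the initial vertex of such a path would witness that all its vertices lie on one; hence by the Gallai--Roy theorem $\chi(D[Z])\le k$. Since $\chi(D)\le\chi(D[Z])+\chi(D[W])$, this gives $\chi(D[W])\ge 4$, and the whole point is to contradict this by showing $\chi(D[W])\le 3$.

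To bound $\chi(D[W])$ one uses crucially that $D$ (hence $D[W]$) has no $P(1,l,1)$ with $l\ge k$, together with $\delta(D)\ge k+3$. I would fix a longest directed path $P=p_{0}\to p_{1}\to\cdots\to p_{t}$ of $D$; by Gallai--Roy $t\ge k+3$, and maximality gives $N^{-}(p_{0})\subseteq V(P)$ and $N^{+}(p_{t})\subseteq V(P)$. Note that $P(1,l,1)$ is invariant under reversing all arcs, and that any directed path $Q=q_{0}\to\cdots\to q_{l}$ with $l\ge k$, an out-neighbour of $q_{0}$ off $V(Q)$, and an in-neighbour of $q_{l}$ off $V(Q)$ with the last two vertices distinct, already forms a $P(1,l,1)$. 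Thus if $p_{0}$ has an out-neighbour off $P$ and $p_{t}$ has an in-neighbour off $P$, taking $Q=P$ finishes at once, unless these two vertices coincide in some $x$ with $p_{0}\to x\to p_{t}$; in that residual configuration, shifting the block by one vertex so that $p_{0}\to x$ becomes the pendant arc, or else exploiting the directed cycle through $x$ and the maximality of $P$, still yields a $P(1,l,1)$.

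\emph{The main obstacle} is the remaining case, which by the reversal symmetry one may take to be that all of the at least $k+3$ neighbours of, say, $p_{t}$ lie on $P$. Here I would relocate the directed block backwards along $P$ --- or, when $p_{t}$ sends an arc far back along $P$, along the long directed cycle that this chord creates --- so that the new sink retains a private in-neighbour on $P$, and then invoke $\delta(D)\ge k+3$ at the new source to supply a private out-neighbour. I expect the delicate point to be exactly the bookkeeping needed to make the source's out-neighbour \emph{both} off the current block \emph{and} distinct from the sink's chosen in-neighbour: each failure of this forces the out-neighbourhoods of the vertices involved into short forward windows along $P$, and iterating the relocation this accumulating rigidity should eventually either produce the required $P(1,l,1)$ or exhibit a proper $3$-colouring of $D[W]$, contradicting $\chi(D[W])\ge 4$ and hence proving the theorem.
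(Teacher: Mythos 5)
There is a genuine gap: your argument stops exactly where the real work begins. The reduction to a critical digraph with $\delta\ge k+3$ and the observation that a long directed path whose source has an out-neighbour off it and whose sink has an in-neighbour off it yields a $P(1,l,1)$ are both fine, but the case you yourself flag as ``the main obstacle'' --- all $k+3$ neighbours of an endpoint lying on the path --- is never resolved. ``Relocating the block,'' ``accumulating rigidity,'' and ``should eventually either produce the required $P(1,k,1)$ or exhibit a proper $3$-colouring of $D[W]$'' are statements of intent, not proof steps; in particular no mechanism is ever given that would convert the structural information gathered along the path into a $3$-colouring of $D[W]$, so the target inequality $\chi(D[W])\le 3$ that your $Z\cup W$ decomposition is built around is left completely unsupported. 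As written, the proposal proves only the easy half of its own plan.

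The paper avoids this difficulty entirely by choosing a different extremal object: not a longest directed path, but a two-block path $P=x_{1}x_{2}\cdots x_{s+1}y$ of type $P(s,1)$ with $s\ge k$ and $s$ maximal (such a path exists since $\chi(D)\ge k+2$ gives any two-block path of length $k+1$). Maximality of $s$ forces $N^{-}(x_{1})\subseteq V(P)$, and the absence of $P(1,l,1)$ forces $N^{+}(x_{1})\subseteq V(P)$ as well, since an out-neighbour $z\notin P$ would give $y\to x_{s+1}\leftarrow\cdots\leftarrow x_{1}\to z$. Hence all $k+3$ neighbours of $x_{1}$ lie among $\{x_{2},\dots,x_{s+1},y\}$, so $x_{1}$ has a neighbour $x_{i}$ with $i\ge 3$ minimal and $|[x_{i},x_{s+1}]|\ge k+1$; whichever orientation the edge $x_{1}x_{i}$ has, concatenating it with $[x_{i},x_{s+1}]$ and the arc $(y,x_{s+1})$ (and, in one case, the arc $(x_{1},x_{2})$) produces a $P(1,l,1)$ with $l\ge k$. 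The pendant vertex $y$ built into the extremal configuration is precisely what replaces the ``relocation'' you were attempting, and it is the idea your proposal is missing.
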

\begin{proof}

Let $D$ be a $(k+4)$-chromatic digraph. Suppose that $D$ contains no $P(1,l,1)$ for all $l\geq k$. Suppose without loss of generality that $D$ is $k+4$-critical, and so $d(v)\geq k+3$ for all $v \in V(D)$. Note that $D$ contains a $P(k,1)$. Let $P=P(s,1)=x_{1}x_{2}\cdots x_{s+1}y$ be such that $s \geq k$ and $s$ is maximal. Due to the maximality of $s$, all the in-neighbors of $x_{1}$, if exist, belong to $P$. Besides, all the out-neighbors of $x_{1}$ belong to $P$, since else let $z \in N^{+}(x_{1})-P$, so $(x_{1},z) \cup P$ is a $P(1,s,1)$ for $s\geq k$,contradiction.\\
Let $i$ be the minimal integer such that $x_{i} \in N(x_{1})$ and $i \geq3$. Note that $|[x_{i},x_{s+1}]| \geq |N(x_{1})-\{x_{2},y\}| \geq k+1$. If $(x_{1},x_{i}) \in E(D)$ then the path $(x_{1},x_{2}) \cup (x_{1},x_{i}) \cup [x_{i},x_{s+1}]) \cup (y,x_{s+1})$ is $P(1,l,1)$ for $l\geq k$, contradiction. Else if $(x_{i},x_{1}) \in E(D)$ then the path $(x_{i},x_{1}) \cup [x_{i},x_{s+1}] \cup (y,x_{s+1})$ is $P(1,l,1)$ for $l\geq k$, contradiction.\\
\end{proof}
\newpage
\begin{Conjecture}

Any $(k+3)$-chromatic digraph $D$ contains a $P(1,l,1)$ for some $l\geq k$.\\

\end{Conjecture}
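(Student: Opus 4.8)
The plan is to retrace the proof of Theorem 3.1 while tracking exactly where the extra colour is consumed, and then to dispose of the narrow residual configuration responsible for the loss. Assume $D$ is $(k+3)$-critical, so $d(v)\ge k+2$ for every $v$, suppose $D$ contains no $P(1,l,1)$ with $l\ge k$, and fix $P=x_1x_2\cdots x_{s+1}y=P(s,1)$ with $s\ge k$ maximal; as in Theorem 3.1, every neighbour of $x_1$ lies on $P$. If $x_1$ is not adjacent to $y$, then $|N(x_1)\setminus\{x_2\}|\ge k+1$ and these vertices all lie in $\{x_3,\dots,x_{s+1}\}$, so the argument of Theorem 3.1 applies verbatim and produces a $P(1,l,1)$ with $l\ge k$. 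So $x_1$ is adjacent to $y$; let $i\ge 3$ be minimal with $x_i\in N(x_1)$, so that $|[x_i,x_{s+1}]|\ge|N(x_1)\setminus\{x_2,y\}|\ge k$. If $(x_1,x_i)\in E(D)$, or if $|[x_i,x_{s+1}]|\ge k+1$, the same argument again finishes the proof. Hence everything reduces to the rigid configuration in which $(x_i,x_1)\in E(D)$ and $|[x_i,x_{s+1}]|=k$; this forces $d(x_1)=k+2$, $i=s+2-k$ (so $s\ge k+1$), and $N(x_1)=\{x_2,y\}\cup\{x_i,x_{i+1},\dots,x_{s+1}\}$.

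In this residual situation I would split on the orientation of the edge $x_1y$. If $(x_1,y)\in E(D)$, note that $x_1\to x_2\to\cdots\to x_{s+1}$ has length $s\ge k$ while $x_1\to y$ and $y\to x_{s+1}$; if $x_{s+1}$ had an in-neighbour outside $V(P)$, or $x_1$ had an out-neighbour $x_j$ with $3\le j\le s+2-k$, one could use $y$ together with that outside vertex (resp.\ with $x_2$) as the two one-arc end-blocks and be done, so we may additionally assume $N^-(x_{s+1})\subseteq\{y,x_1,\dots,x_s\}$. The remaining freedom is in the neighbourhoods of $x_i,\dots,x_{s+1}$, of $y$, and of $x_{i-1}$, and here I would reroute through the back arc $x_i\to x_1$: the path
\[
y\to x_{s+1}\leftarrow\cdots\leftarrow x_i\to x_1\to\cdots\to x_{i-1}
\]
is a $P(1,k-1,i-1)$, and the whole game is to enlarge its middle block by one, i.e.\ to find an in-neighbour of some $x_t$ ($i\le t\le s+1$) lying off the current path, or a suitable neighbour of $y$ or of $x_{i-1}$, with which the block of length $k-1$ can be pushed to length $k$. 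The case $(y,x_1)\in E(D)$ is treated symmetrically, starting from the directed path $y\to x_1\to\cdots\to x_{s+1}$ of length $s+1$ and the arc $y\to x_{s+1}$, taking $y\to x_1\to\cdots\to x_s$ as the middle block with $x_{s+1}$ as one end-block, and searching $N^-(x_s)$ for the other end-block off that path.

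The main obstacle — the reason the statement is only a conjecture — is precisely this last step. The arc $x_i\to x_1$ makes $x_1\to x_2\to\cdots\to x_i\to x_1$ a directed cycle through $x_1$, so every directed path one naturally wants as the middle block is pinned at both ends by that cycle, and each attempt to produce two \emph{distinct} end-vertices off the path loses exactly the missing unit of length; the bare bound $d(\cdot)\ge k+2$ does not obviously supply the in-/out-degree information one needs at the particular vertices $x_i,\dots,x_{s+1},y,x_{i-1}$. Overcoming this seems to require either exploiting criticality more forcefully — for instance the $2$-connectivity of $D$, to perform a local exchange at $x_1$ and replace $P$ by a better-behaved maximal $P(s,1)$ — or a more global device bounding the chromatic number of an auxiliary subdigraph in the spirit of Lemma 2.1. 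Attacking the conjecture instead via a longest directed path (Gallai--Roy gives one of length $\ge k+2$) does not evade the difficulty: in the presence of a long directed cycle the sub-paths of a longest directed path again have controlled endpoints, and one is left with the same extremal case to rule out. I therefore expect a proof to hinge on a clever use of criticality that destroys the rigid configuration above.
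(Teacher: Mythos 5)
This statement is not proved in the paper at all: it is stated as Conjecture~3.1 and left open, the paper proving only the weaker Theorem~3.1 with hypothesis $(k+4)$-chromatic, precisely because that argument needs $|N(x_1)\setminus\{x_2,y\}|\ge k+1$, i.e.\ $d(x_1)\ge k+3$. Your write-up is honest about this, and what it actually contains is not a proof but a reduction: assuming $D$ is $(k+3)$-critical with no $P(1,l,1)$, $l\ge k$, and $P=x_1\cdots x_{s+1}y$ a maximal $P(s,1)$ with $s\ge k$, you correctly observe that all neighbours of $x_1$ lie on $P$, that the case $y\notin N(x_1)$, the case $(x_1,x_i)\in E(D)$, and the case $|[x_i,x_{s+1}]|\ge k+1$ all follow by the Theorem~3.1 argument, and that everything collapses to the rigid configuration $(x_i,x_1)\in E(D)$, $i=s+2-k$, $d(x_1)=k+2$, $N(x_1)=\{x_2,y\}\cup[x_i,x_{s+1}]$. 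That bookkeeping is sound and correctly locates where the extra colour is spent.

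The gap is everything after that. The subsequent discussion (``split on the orientation of $x_1y$'', ``enlarge the middle block of $y\to x_{s+1}\leftarrow\cdots\leftarrow x_i\to x_1\to\cdots\to x_{i-1}$ by one unit'') never reaches a contradiction, and you say so yourself: the bound $d(v)\ge k+2$ does not give the in-/out-degree control you need at the specific vertices $x_i,\dots,x_{s+1},y,x_{i-1}$, and the cycle $x_1\to\cdots\to x_i\to x_1$ pins every candidate middle block at both ends. So the missing step is not a repairable local detail but the entire remaining content of the conjecture; eliminating the extremal configuration is exactly what nobody (including the authors) knows how to do with only $k+3$ colours. If you want to push further, the more promising direction is the one you gesture at in your last sentences and the one the paper itself uses for the harder problem $P(1,k,1)$ in Theorem~3.2: exploit criticality globally, e.g.\ partition $D$ according to in-degree (vertices with $d^-\ge k+1$ versus the rest) and use Lemma~2.1 to bound the chromatic number of one side, rather than trying to augment a single maximal $P(s,1)$ locally at $x_1$.
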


The problem of existence of $P(1,k,1)$ in a digraph $D$ is quite different. Define $f(k)$ to be the smallest integer such that any $f(k)$-chromatic digraph contains a $P(1,k,1)$. Based on the existence of paths of the form $P(k,1)$, we may show that $f (k)\leq 3k+4$. Indeed, let $D$ be a $(3k+4)$-chromatic digraph with no $P(1,k,1)$. Let $U$ be the set of all origins of any $P(k,1)$ in $D$. It is clear that $U\neq \emptyset$. If all the vertices in $U$ has an out-degree in $D$ less than $k+1$ then, by lemma 2.1, $\chi(U) \leq 2k+2$. Then $\chi (D-U) \geq k+2$ and so $D-U$ contains a $P(k,1)$, a contradiction. Hence there exists $u \in U$ such that $d^{+}(u) > k+1$. let $P_{u}(k,1)$ be a path of the form $P(k,1)$ in $D$ with origin $u$. Let $z \in N^{+}(u) - P_{u}(k,1)$ and so the path $P_{u}(k,1) \cup (u,z)$ is $P(1,k,1)$, contradiction.\\

In the sequel, we improve the bound above of $f(k)$ to $2k+1$.\\
\begin{Theorem}
$f(k)\leq 2k+1$.
\end{Theorem}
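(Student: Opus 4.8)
The goal is to show that every $(2k+1)$-chromatic digraph $D$ contains a path $P(1,k,1)$, improving the bound $3k+4$ obtained from the argument sketched above. The natural strategy is to refine the set-splitting argument: rather than splitting $D$ according to a single out-degree threshold, I would work with a carefully chosen maximal (or extremal) path $P=P(s,1)$ with $s\geq k$ and analyze the neighborhood structure of its endpoints, exactly in the spirit of the proof of Theorem~3.1 above, but pushing the bookkeeping further to save the factor that currently costs us $3k$ instead of $2k$.

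**Main steps.** First, reduce to a $(2k+1)$-critical digraph $D$, so that $d(v)\geq 2k$ for every vertex $v$; note also that (as in the $3k+4$ argument) $D$ contains some $P(k,1)$, since any digraph with chromatic number exceeding $2k$ contains $P(k,1)$ by the $i=0$ case of Lemma~2.2 together with Lemma~2.1. Second, among all paths of the form $P(s,1)$ with $s\geq k$, choose one, say $P=x_1x_2\cdots x_{s+1}y$, with $s$ \emph{minimal} (not maximal — the point being that we only need a block of length exactly $k$, and minimality gives control over the forward block). Third, exploit the criticality bound $d(x_1)\geq 2k$: if $s=k$, then $x_1$ has at least $2k$ neighbors, at least $2k-1$ of them besides $x_2$; I would show that most of these neighbors must lie on $P$ (else a stray in- or out-neighbor of $x_1$ extends $P$ to the desired $P(1,k,1)$ or to a longer $P(s',1)$ contradicting minimality in a controlled way), and then a neighbor $x_i$ with $i$ large enough produces, via an arc $(x_1,x_i)$ or $(x_i,x_1)$ together with $[x_i,x_{s+1}]$ and the arc $(y,x_{s+1})$, a path $P(1,l,1)$ with $l\geq k$ — but here we need $l=k$ exactly, which forces $i$ to be chosen precisely so that the backward block has length $k$. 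The counting $d(x_1)\geq 2k$ is exactly what is needed to guarantee such an $x_i$ exists at the right position along $[x_3,x_{s+1}]$. Fourth, handle the case $s>k$ separately, or rather show minimality of $s$ already forces $s=k$ by a short exchange argument.

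**The expected obstacle.** The delicate point is pinning the \emph{length} of the backward block to be exactly $k$: Theorem~3.1's proof is comfortable because it only needs $l\geq k$, so any neighbor $x_i$ of $x_1$ with $i\geq 3$ far enough along works. Here, requiring $l=k$ exactly means the relevant neighbor $x_i$ of $x_1$ must sit at a \emph{specific} distance from the end of $P$, namely $|[x_i,x_{s+1}]|=k+1$, i.e. $i=s-k+1$. So the real content is: why must $x_1$ be adjacent to $x_{s-k+1}$ (or to a vertex that can play its role), and why can't all the $\geq 2k-1$ relevant neighbors of $x_1$ cluster away from that position? I expect this to require a more global argument — possibly choosing $P$ to also be extremal with respect to a secondary quantity, or arguing that if $x_1$ misses $x_{s-k+1}$ then one can re-root or shift the path. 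A clean way around it may be to instead choose $P=P(s,1)$ with $s\geq k$ and, among such, with the origin $x_1$ of maximum out-degree, then combine the $2k$-regularity-type bound with Lemma~2.1 applied to the subdigraph of low-out-degree vertices to force the chromatic number below $2k+1$ unless the desired path already appears. I would develop both routes and keep whichever yields the clean bound $2k+1$; the interplay between the out-degree count $d(x_1)\ge 2k$ and the exact-length requirement is where the proof will stand or fall.
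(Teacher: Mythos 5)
There is a genuine gap: your text is a plan rather than a proof, and the one step you yourself identify as decisive is left unresolved. You correctly observe that the whole difficulty is pinning the middle block to length \emph{exactly} $k$, but you then only list speculative remedies (minimality of $s$, re-rooting, a secondary extremal condition, choosing the origin of maximum out-degree) and conclude that you ``would develop both routes and keep whichever yields the clean bound.'' None of these routes is carried out, and the most concrete one does not obviously work: with $P=P(s,1)=x_1\cdots x_{s+1}y$ and $s$ minimal, the bound $d(x_1)\geq 2k$ gives you many neighbors of $x_1$ on $P$, but nothing forces a neighbor at the specific position $x_{s-k+1}$, and an arc $(x_1,x_i)$ or $(x_i,x_1)$ at the wrong position only yields $P(1,l,1)$ with $l\neq k$ or a shorter $P(s',1)$, which minimality of $s\geq k$ does not immediately exclude either (shortcut arcs inside the forward block do not contradict minimality of $s$ in any direct way). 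So the exact-length requirement, which you flag as the crux, is exactly what remains unproved.

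For comparison, the paper avoids this issue by not working with $P(s,1)$-type paths at all. It takes $D$ critical, partitions $V(D)$ by the in-degree threshold $k+1$ into $D_1$ (in-degree at least $k+1$ in $D$) and $D_2$ (the rest, whose vertices then have out-degree at least $k$ in $D$), and uses $\chi(D)\leq\chi(D_1)+\chi(D_2)$ to force either $\chi(D_2)\geq k$ or $\chi(D_1)\geq k+2$. In each case it takes a \emph{directed} path of maximal length in that part and builds the $P(1,k,1)$ by attaching single arcs at both ends of a length-$k$ segment of it: the middle block of length exactly $k$ is a piece of the directed path plus one attached arc, so its length is prescribed by construction rather than by locating a neighbor at a precise position; the degree conditions ($d^+\geq k$ on $D_2$, $d^-\geq k+1$ on $D_1$) supply the two unit blocks, and maximality of the directed path confines the relevant neighborhoods, with a long case analysis (ending, in one branch, with $D$ forced to be a $(2k+1)$-tournament, where the path exists by known results). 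That structural idea --- split by in-degree, use one-block maximal paths, and manufacture the exact middle length by attachment --- is precisely the ingredient your proposal is missing.
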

\begin{proof}
Let $D$ be a digraph with $\chi(D)=2k+1$ with $k \geq2 $ positive integer. Suppose without loss of generality that $D$ is critical. We argue by the way of contradiction assuming that $D$ has no $P(1,k,1)$. Let $D_{1}$ be the subdigraph of $D$ induced by the vertices of $D$ with $d_{D}^{-}(v)\geq k+1$ and $D_{2}$ the subdigraph of $D$ induced by $V(D)-V(D_{1})$. Note that all the vertices in $D_{2}$ have an out-degree in $D$ is greater than or equal $k$. We either have $\chi(D_{2})\geq k$ or $\chi(D_{1})\geq k+2$.\\

 Suppose that $\chi(D_{2})\geq k$.  Let $P$ be a directed path of maximal length in $D_{2}$. It's clear that $l(P) \geq k-1$. We claim that $l(P)\geq k$. Indeed, suppose to the contrary that $l(P)=k-1$. Set $P=x_{1}\cdots x_{k}$. Note that $N^{+}_{D_{2}}(x_{k})\subset P$ then $x_{k}$ has at least two out-neighbors in $D_{1}$, say $a_{1}$ and $a_{2}$. Since $d^{+}(x_{1})\geq k$, let $v$ an out-neighbor of $x_{1}$ outside $P$. If $v\notin \{a_{1},a_{2}\}$ then $N^{-}(a_{i})=P\cup \{v\}$ for $i=1,2$, since else let $z\in N^{-}(a_{i})-(P\cup \{v\})$ then $ (z,a_{i}) \cup (x_{k},a_{i}) \cup P \cup (x_{1},v)$ is a $P(1,k,1)$, contradiction. But now $(v,a_{2})   \cup (x_{k},a_{2})\cup P \cup (x_{1},a_{1})$ is a $P(1,k,1)$, contradiction. Thus $v\in \{a_{1},a_{2}\}$. Without loss of generality consider that $v=a_{2}$. Note that $N^{-}(a_{1})=P\cup \{v\}$ and $N^{+}(x_{1})\subseteq P\cup \{v\}$. Since $(a_{2},a_{1})\in D$ and $d^{-}(a_{2})\geq k+1 $, then $a_{2}$ has an in-neighbor outside $P\cup \{a_{1}\}$, say $x$. Set $(x,a_{2}) \cup (x_{k},a_{2})\cup P \cup (x_{1},a_{1})$ is a $P(1,k,1)$, contradiction. \\

Set then $P=x_{1}\cdots x_{r+k}$ with $r\geq1$ and $P_{1}=x_{r}\cdots x_{r+k}$. \\

We now claim that $N^{+}(x_{r+k})\cap D_{1}= \emptyset$. Suppose to the contrary that $N^{+}(x_{r+k})\cap D_{1}\neq \emptyset$. If $|N^{+}(x_{r+k})\cap D_{1}|\geq 2$. Then a contradiction can be reached as before. Then $|N^{+}(x_{r+k})\cap D_{1}|=1$. Let $a_{1} \in N^{+}(x_{r+k})\cap D_{1}$. We claim that $x_{r} \in N^{-}(a_{1})$. Suppose not, then there exists $w\in N^{-}(a_{1})-P_{1}$. If $(x_{r+k-1}, a_{1}) \in E(D)$, then $N^{+}(x_{r}) = [x_{r+1},x_{r+k}]$ and so $(x_{r},x_{r+2}) \in E(D)$. Consequently, $(x_{r},x_{r+1})\cup (x_{r},x_{r+2}) \cup [x_{r+2},x_{r+k}] \cup (x_{r+k}, a_{1}) \cup (w,a_{1})$ is a $P(1,k,1)$, contradiction. Hence $(x_{r+k-1}, a_{1}) \notin E(D)$ and so there exists $w'\in N^{-}(a_{1})-P_{1}-\{w\}$. In this case $N^{+}(x_{r+1}) = [x_{r+2},x_{r+k}] \cup \{a_{1}\}$. And so $N^{+}(x_{r}) = [x_{r+1},x_{r+k}]$ in particular $(x_{r},x_{r+2}) \in E(D)$, contradiction as before. Hence $x_{r} \in N^{-}(a_{1})$. Consequently, $N^{+}(x_{r+1}) = [x_{r+2},x_{r+k}] \cup \{a_{1}\}$. In particular $(x_{r+1},x_{r+k}) \in E(D)$, but $N^{+}(x_{r+k}) \geq k$ then there exists $x_{j} \in N^{+}(x_{r+k})$ for $1 \leq j<r$ so $r\geq2$. As before $x_{r-1} \in N^{-}(a_{1})$. Then $x_{r+k}$ has at least two out-neighbors in $[x_{1},x_{r}[$, let $x_{i}$ be one of them with $i$ maximal, then $(x_{i-1},x_{i}) \cup [x_{r+1}, x_{r+k}] \cup (x_{r+k},x_{i}) \cup (x_{r+1},a_{1})$ is a $P(1,k,1)$, contradiction. \\

Next, if $(x_{r+k},x_{r}) \in E(D)$ then $(x_{r+k},x_{1})\in E(D)$. Suppose to the contrary that $(x_{r+k},x_{1}) \notin D$.
Since $(x_{r+k},x_{r}) \in E(D)$ then $N^{+}(x_{r+1})\subseteq [x_{r+2},x_{r+k}] \cup \{x_{r-1}\}$, since else we get a $P(1,k,1)$, a contradiction. But $d^{+}(x_{r+1}) \geq k$, then $N^{+}(x_{r+1})= [x_{r+2},x_{r+k}] \cup \{x_{r-1}\}$. In particular, $(x_{r+1},x_{r+k}) \in E(D)$. Since $d^{+}(x_{r+k}) \geq k$, then there exists $1<i<r-1$ such that $x_{i} \in N^{+}(x_{r+1}) $. Hence $(x_{i-1},x_{i}) \cup [x_{r+1},x_{r+k}] \cup (x_{r+k},x_{i}) \cup (x_{r+1},x_{r-1})$ is $P(1,k,1)$, a contradiction.\\ 

Now if $|N^{+}(x_{r+k}) \cap [x_{2},x_{r}[|\geq 3$, then we get $P(1,k,1)$, contradiction. If $|N^{+}(x_{r+k}) \cap [x_{2},x_{r}[|=2$ then so $(x_{r+k},x_{1}) \in E(D)$, since else we get that $(x_{r+k},x_{r+1}) \in E(D)$, but since $d^{+}(x_{r+1}) \geq k$ then $x_{r+1}$ has at least two out neighbors outside $P_{1}$ which gives a contradiction as before.
If $|N^{+}(x_{r+k}) \cap [x_{2},x_{r}[|\leq1$, then $(x_{r+k},x_{1}) \in E(D)$.
Hence $x_{i}$ can play the role of $x_{r+k}$ for every $x_{i} \in V(P)$. We claim that $N^{-}(x_{i}) \subset P$ for every $x_{i} \in V(P)$. Otherwise, without loss of generality suppose that there exists $z \in N^{-}(x_{r+k})-P$. $z\in D_{1}$, since else $ (z,x_{r+k})\cup (x_{r+k},x_{1}) \cup [x_{1},x_{r+k-1}] $ is a directed path in $D_{2}$ longer than $P$, a contradiction. Since $N^{+}(x_{r}) \subseteq P$ then $N^{+}(x_{r})=[x_{r+1},x_{r+k}]$, otherwise we get a $P(1,k,1)$, in particular, $(x_{r},x_{r+2})$ and $(x_{r},x_{r+k}) \in E(D)$. But $d^{+}(x_{r+k}) \geq k$, then there exists $x_{j} \in N^{+}(x_{r+k}) $ such that $1<j<r$. Hence $(x_{r},x_{r+1}) \cup (x_{r},x_{r+2}) \cup [x_{r+2},x_{r+k}] \cup (x_{r+k},x_{j}) \cup (x_{j-1},x_{j})$ is a $P(1,k,1)$, a contradiction. Hence $N^{-}(x_{i}) \subset P$ for every $x_{i} \in V(P)$. Since also $N^{+}(x_{i}) \subset P$ and $D$ is critical then $V(D)=V(D_{2})=V(P)$. Since $d^{+}(x_{i}) \geq k$ and $d^{-}(x_{i}) \leq k$ for every $x_{i} \in V(P)$, then $d^{+}(x_{i})=d^{-}(x_{i})=k$ for every $x_{i} \in V(P)$ and so $D$ is a $2k$-regular digraph. Since $\chi(D)=2k+1$, then $D$ is a $2k+1$-tournament and so it contains a $P(1,k,1)$, a contradiction.\\

Consequently, $\chi{(D_{2})}<k$ and so $\chi{(D_{1})} \geq k+2$ , then let $P=z_{1}\cdots z_{k+r}$ be a directed path in $D_{1}$ with maximal length in ($r\geq2$) and let $P_{2}=z_{1}\cdots z_{k+1}$.\\
We prove that $N^{-}(z_{1}) \cap D_{2}= \emptyset $. Suppose to the contrary that $N^{-}(z_{1}) \cap D_{2}\neq \emptyset $. Let $a_{1} \in D_{2} \cap N^{-}(z_{1})$. Since $d^{-}(z_{k}) \geq k+1$, then there exists $v \in N^{-}(z_{k}) -[z_{1},z_{k+1}]- \{ a_{1} \}$ . Note that $d^{+}(a_{1}) \geq k$. If there exists $w \in N^{+}(a_{1})-[z_{1},z_{k}]-\{v\}$, we get a $P(1,k,1)$, a contradiction. Hence $N^{+}(a_{1}) \subseteq ([z_{1},z_{k}]\cup \{v\})$. If $(a_{1},z_{2})\in E(D)$ then $N^{-}(z_{k+1}) = [z_{1},z_{k}] \cup \{a_{1}\}$ since else we get a $P(1,k,1)$, a contradiction. But $N^{+}(a_{1}) \subseteq ([z_{1},z_{k}]\cup \{v\})$, a contradiction. And so $(a_{1},z_{3}) \in E(D)$. Since $d^{-}(z_{k+2})\geq k+1$ and $z_{k+2}\notin N^{+}(a_{1})$, then there exists $x\in N^{-}(z_{k+2})-([z_{3},z_{k}]\cup \{a_{1},z_{1}\})$. Hence $(x,z_{k+2}) \cup [z_{3},z_{k+2}]\cup (a_{1},z_{3}) \cup (a_{1},z_{1})$ is a $P(1,k,1)$, contradiction.\\
Since $d^{-}(z_{1}) \geq k+1$ then there exists $z_{t_{1}},z_{t_{2}} \in N^{-}(z_{1})$ such that $k<t_{1}<t_{2}<k+r$. Also since $d^{-}(z_{k}) \geq k+1$ then $|N^{-}(z_{k})- [z_{1},z_{k-1}]| \geq 2$. Set $I=N^{-}(z_{k})- [z_{1},z_{k-1}]$. $I\subseteq \{z_{t_{1}},z_{t_{1}+1}\} \cap  \{z_{t_{2}},z_{t_{2}+1}\}$ since else we get a $P(1,k,1)$, a contradiction. But $|I| \geq2$, and so $|\{z_{t_{1}},z_{t_{1}+1}\} \cap  \{z_{t_{2}},z_{t_{2}+1}\}|\geq2$ then $\{t_{1},t_{1}+1\}=\{t_{2},t_{2}+1\}$, a contradiction.\\

\end{proof} 
 
\bigskip

\textbf{Acknowledgment}.The authors would like to acknowledge the National Council for Scientific Research of Lebanon (CNRS-L) and the Agence Universitaire de la Francophonie in cooperation with Lebanese University for granting a doctoral fellowship to Zahraa Mohsen.

\bibliographystyle{alphaabbrv}
\bibliography{biblio-v5}


\end{document}